\documentclass[11pt]{article}
\ifdefined\pdfinfoomitdate\pdfinfoomitdate=1\fi
\usepackage[margin=1in]{geometry}
\usepackage[T1]{fontenc}
\usepackage{lmodern}
\usepackage{amsmath,amssymb,amsthm,mathtools}
\usepackage{microtype}
\usepackage[colorlinks=true,linkcolor=blue,citecolor=blue,urlcolor=blue]{hyperref}
\hypersetup{pdftitle={Derivative and multiplicity loci of elementary symmetric polynomials in positive characteristic},pdfauthor={Ying Xie},pdfsubject={Exact dimensions, components, and generic multiplicities},pdfcreator={LaTeX}}
\newtheorem{theorem}{Theorem}[section]
\newtheorem{proposition}[theorem]{Proposition}
\newtheorem{lemma}[theorem]{Lemma}
\newtheorem{corollary}[theorem]{Corollary}
\theoremstyle{remark}

\newcommand{\A}{\mathbb A}
\newcommand{\Ga}{\mathbb G_a}

\newcommand{\ind}{\mathbf 1}
\newcommand{\Crit}{\operatorname{Crit}}

\newcommand{\Spec}{\operatorname{Spec}}
\title{Derivative and multiplicity loci of elementary symmetric polynomials in positive characteristic}
\author{Ying Xie\\Kennesaw State University\\\texttt{yxie2@kennesaw.edu}}
\date{}

\begin{document}
\maketitle
\begin{abstract}
We determine the dimensions of all derivative loci and all higher
multiplicity loci of an elementary symmetric polynomial over an
algebraically closed field of positive characteristic. For the degree-$d$
polynomial in $n$ variables, set $Q=p^{v_p(n-d+1)}$. The common zero
locus of its order-$q$ derivatives has dimension
$d-q-1+\ind_{Q>d-q}$, whereas the locus of zeros of multiplicity at
least $s$ has dimension $d-s+\ind_{Q>d}$. The order-two case settles
the dimension question in a conjecture of Orzel, with a necessary
correction when $d$ is a power of $p$. We classify all components
attaining the larger dimension and prove that they are generically
smooth. Along the coordinate components in the smaller-dimension
cases, we determine the local rings and obtain generic multiplicity
exactly $Q$. Finally, we identify the full translation stabilizer
scheme and derive a Frobenius normal form. These results distinguish
dimension jumps from infinitesimal thickening and give a uniform
description of their dependence on the characteristic.
\end{abstract}

\medskip
\noindent\textbf{Keywords.} Elementary symmetric polynomial; derivative
locus; multiplicity locus; positive characteristic; translation stabilizer.

\section{Introduction}

The geometry of derivative loci of elementary symmetric polynomials
has a marked dependence on the characteristic. In characteristic zero,
the critical locus of the degree-$d$ elementary symmetric polynomial
has dimension $d-2$. In positive characteristic, the dimension can
increase by one; moreover, vanishing of the gradient need not imply
vanishing of the polynomial. An exact description must therefore
distinguish the critical scheme from the singular scheme of the
zero hypersurface. It must also account for nonreduced structure,
which can persist even when the dimension does not increase.

This paper determines the dimensions for every derivative order and
every order of vanishing. The answer depends on the single integer
$p^{v_p(n-d+1)}$. The same integer measures the generic thickness of
the coordinate components and determines the translation stabilizer.
Thus the characteristic-dependent exceptions admit both an exact
numerical criterion and a scheme-theoretic explanation.

\subsection{Previous work and the dimension problem}

In characteristic zero, Limaye, Mittal, and Pareek
\cite[Lemma~13]{LMP} prove the upper bound $d-2$ for the critical
locus. Coordinate subspaces give the matching lower bound; see also
the historical discussion in \cite{Orzel}. A general method for bounding
singular loci of symmetric hypersurfaces by partition spaces appears
in Cafure, Matera, and Privitelli \cite[Theorem~3.1 and Remark~3.2]{CMP}.
In particular, their argument bounds the number of distinct coordinate
values and shows that a component attaining the resulting dimension
bound must be an entire partition space.

For elementary symmetric polynomials in positive characteristic,
Orzel \cite{Orzel} proves that the order-two zero locus has dimension
between $d-2$ and $d-1$ and proposes a complete criterion in
Conjecture~2.11. This question arises in the study of formula
complexity and linear projections of elementary symmetric polynomials.
The first-order result below replaces the two possible dimensions by
a necessary and sufficient arithmetic criterion. It also identifies
a prime-power boundary case in which the criterion printed in that
conjecture, and in the associated sufficient condition, must be amended.

The contribution goes beyond this first-order question. We determine
the full numerical profile of higher derivative and multiplicity
loci, classify all components responsible for an increase in
dimension, and compute the local schemes along coordinate subspaces.
The partition-space dimension principle is inherited from earlier
work; the exact congruence criteria and the higher-order and local
scheme calculations are the substance of the results proved here.

\subsection{Notation and dimension formulas}

Let $K$ be an algebraically closed field, and write
\[
 e_d=e_d(x_1,\ldots,x_n)
 =\sum_{\substack{I\subseteq[n]\\|I|=d}}\prod_{i\in I}x_i,
 \qquad 2\le d\le n.
\]
We write $[n]=\{1,\ldots,n\}$ and use the conventions $e_0=1$
and $e_j=0$ when $j$ exceeds the number of variables. For $I\subseteq[n]$, let $\partial_I$ denote
differentiation once in each variable indexed by $I$, with
$\partial_\varnothing e_d=e_d$. Since $e_d$ is multilinear, these
squarefree derivatives agree with the corresponding Hasse derivatives,
and derivatives with a repeated index vanish. Define closed subschemes
of $\A^n_K$ by
\begin{align}
 C_q(e_d)&=V(\partial_Ie_d:|I|=q),
 &&1\le q\le d-1,\label{eq:Cdef}\\
 Z_s(e_d)&=V(\partial_Ie_d:|I|<s),
 &&2\le s\le d.\label{eq:Zdef}
\end{align}
Thus $C_1(e_d)=\Crit(e_d)$, while $Z_2(e_d)$ is defined by $e_d$
and its first derivatives. More generally, the underlying set of
$Z_s(e_d)$ consists of the points at which $e_d$ has order of
vanishing at least $s$. Dimensions refer to underlying closed sets;
local rings and multiplicities refer to the displayed defining ideals.
The symbol $\ind_A$ is $1$ when the condition $A$ holds and $0$ otherwise.

\begin{theorem}[Exact dimensions]\label{thm:dimensions}
Let $K$ be algebraically closed of characteristic $p>0$, and set
\[
 L=n-d+1,\qquad Q=p^{v_p(L)},
\]
where $v_p(L)$ is the exponent of $p$ in the prime factorization of $L$.
Then
\begin{align}
 \dim C_q(e_d)&=d-q-1+\ind_{Q>d-q},&&1\le q\le d-1,\label{eq:Cdim}\\
 \dim Z_s(e_d)&=d-s+\ind_{Q>d},&&2\le s\le d.\label{eq:Zdim}
\end{align}
In characteristic zero the dimensions are $d-q-1$ and $d-s$,
respectively.
\end{theorem}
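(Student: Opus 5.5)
Every point of $C_q$ or $Z_s$ will be described by its value multiset, and the dimension will be read off from the number of distinct values allowed. The key identity is that for $|I|=q$ one has $\partial_I e_d = e_{d-q}(x_{[n]\setminus I})$. For a point $a\in K^n$ whose distinct values are $c_1,\dots,c_r$ with multiplicities $m_1,\dots,m_r$, this gives
\[
\sum_j e_j(x_{[n]\setminus I})t^j=\prod_k (1+c_kt)^{m_k-\mu_k},
\]
where $\mu_k$ is the number of indices of $I$ with value $c_k$. So $a\in C_q$ exactly when, for every admissible choice of the $\mu_k$ (with $\sum\mu_k=q$ and $\mu_k\le m_k$), the coefficient of $t^{d-q}$ in $\prod_k(1+c_kt)^{m_k-\mu_k}$ vanishes. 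Likewise $a\in Z_s$ exactly when this holds for all $q<s$.

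The dimension will follow from the partition-space principle of Cafure--Matera--Privitelli. A component consists generically of points with a fixed multiplicity pattern, and its dimension is at most the number of distinct values. Zero may be one of those values but imposes no degree of freedom, so I will count nonzero distinct values. The question therefore becomes: what is the largest number $r$ of distinct nonzero values compatible with membership, uniformly over a partition space?

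\textbf{Lower bounds.} For $C_q$, I will take points having at most $d-q-1$ nonzero coordinates (the rest zero). Every $e_{d-q}$ of a subset then vanishes, which gives a coordinate subspace of dimension $d-q-1$ and hence $\dim\ge d-q-1$.

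When $Q>d-q$, I will take instead a point with $d-q$ generic nonzero distinct values, each of multiplicity one, and a zero block of size $n-d+q$. For subsets $I$ meeting the nonzero values, the count of surviving nonzero values drops below $d-q$, so $e_{d-q}$ vanishes. The remaining case is $I$ inside the zero block, where $e_{d-q}$ equals the product of the values, which is nonzero. That fails, so this construction needs modification.

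The better choice is one value $c$ of multiplicity $L+\ell$ together with $d-q-1$ further generic values. Taking the coefficient of $t^{d-q}$ reduces to binomial coefficients $\binom{L+\ell-\mu}{j}$, and Lucas' theorem kills them exactly when $Q>d-q$. I expect to tune the multiplicity as $L-1+(\text{stuff})$ so that the relevant coefficients are of the form $\binom{L}{j}$ with $0<j<Q$, which vanish mod $p$. The $Z_s$ case is the same computation run for all orders $q<s$ at once. The jump condition then becomes $Q>d$, because the worst constraint comes from $q=0$ paired with $s$ free parameters, yielding $d-s+1$ free values.

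\textbf{Upper bounds (main obstacle).} I must show that any configuration with more than $d-q-1$ free values, respectively more than $d-q$, forces a contradiction. Here I will use the polynomial identity in $t$. I will vary $I$ by moving a single index from class $k$ to class $l$, which multiplies the generating polynomial by $(1+c_kt)/(1+c_lt)$. Vanishing of the $t^{d-q}$ coefficient across all such moves, with the values generic, then forces the power series $\prod_k(1+c_kt)^{m_k}$ to have its low coefficients vanish identically in the free variables.

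After that, a generic-values argument will show that some block must have multiplicity congruent to a number with large $p$-adic valuation. The total count $\sum m_k=n$, together with the count of blocks, then forces $p^{v}\mid L$ with $p^v>d-q$. The delicate part is handling the zero block and the possibility of nontrivial multiplicities on several blocks simultaneously. My first attempt will be induction on $q$ via $\partial_i C_q\subset C_{q+1}$-type restrictions, fixing one coordinate. In characteristic zero the same argument runs with all binomials nonzero, which gives the dimensions $d-q-1$ and $d-s$.
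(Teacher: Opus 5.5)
You have the lower bounds essentially right, but the upper bound, which is the actual content of the theorem, is missing. On the lower bounds: coordinate subspaces give $d-q-1$. The space with $d-q-1$ free singleton values and one block of size $L+q$ (so $\ell=q$) lies in $C_q$ when $Q>d-q$. The reason is not that Lucas kills each $\binom{L+a}{j}$. Rather, $(1+ct)^{L}\equiv1\pmod{t^{Q}}$ lets you replace $(1+ct)^{L+a}$ by $(1+ct)^a$, and the resulting product has degree $d-q-1$.

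The upper bound needs two ideas your sketch lacks. First, nothing in it bounds the number of distinct coordinate values at a point of $C_q$ by $d-q$; the partition-space principle only converts such a bound into a dimension bound. The paper gets it from the deletion identity $\partial_ie_d=\sum_{j=0}^{d-1}(-x_i)^je_{d-1-j}(x)$. At a point of $C_1$, every coordinate is a root of one polynomial of degree $d-1$ with unit leading coefficient. For $C_q$, one applies this to $\partial_Be_d$ with $|B|=q-1$, chosen disjoint from $d-q+1$ coordinates with distinct values. Second, once an extremal component is known to be a whole partition space, you need an exact containment criterion. For $q=1$ it is $\prod_j(1+y_jT)^{m_j-1}\equiv1\pmod{T^{d}}$, decided by $(1+z)^{p^ab}=(1+z^{p^a})^b$ and specialization to one block. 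Your claim that the low coefficients of $\prod_k(1+c_kt)^{m_k}$ vanish is false as stated: these are the restricted $e_j(x)$, and for example $e_1=a\neq0$ on the diagonal component of $C_1(e_2)$ with $p=2$, $n=3$. For $q\ge2$ the paper uses a transfer argument. Two ways of deleting $q-1$ indices without emptying a block, differing by one unit moved between blocks, give incompatible congruences. This forces all but one block to be singletons. You flag this as delicate but offer only an unspecified induction.

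For $Z_s$, your heuristic that the decisive constraint is $e_d$ itself is wrong, so it cannot give necessity of $Q>d$. Take $p=3$, $d=5$, $s=4$, $n=7$, so $L=Q=3\le d$. On the space $(y,b,b,b,b,b,b)$ one has $e_5=6b^5+15yb^4=0$, and the space lies in $C_3(e_5)$. Yet the first derivative $e_4(y,b,b,b,b,b)=5b^4+10yb^3\equiv2b^4+yb^3\neq0$, so the space is not in $Z_4(e_5)$. The intermediate orders are what matter. Put $r=d-s+1$. The paper starts from divisibility of $L$ by the least $p$-power exceeding $r$. For a $p$-power $R$ with $r<R\le d$ and $R\mid L$, it takes the derivatives of order $d-R$ inside the large block; their pure $b^R$ coefficient is $\binom{L+R-r}{R}\equiv L/R\pmod p$. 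This climbs one $p$-power at a time until $L$ is divisible by a $p$-power exceeding $d$. As written, your proposal establishes (modulo details) only the lower bounds $\dim C_q\ge d-q-1+\ind_{Q>d-q}$ and $\dim Z_s\ge d-s+\ind_{Q>d}$. The characteristic-zero claim inherits the same gap.
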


In positive characteristic we retain the notation $L$ and $Q$ of
Theorem~\ref{thm:dimensions} throughout. The first-order specialization
has the following precise form.

\begin{corollary}[The order-two criterion]\label{cor:criterion}
Let $P_-$ be the smallest $p$-power at least $d$, and let $P_+$ be
the smallest $p$-power strictly greater than $d$. Then
\[
 \dim C_1(e_d)=d-1\ \Longleftrightarrow\ P_-\mid n-d+1,
 \qquad
 \dim Z_2(e_d)=d-1\ \Longleftrightarrow\ P_+\mid n-d+1.
\]
In each case the dimension is $d-2$ when the divisibility fails.
\end{corollary}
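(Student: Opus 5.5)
This is a direct specialization of Theorem~\ref{thm:dimensions}, so the plan is to set $q=1$ in \eqref{eq:Cdim} and $s=2$ in \eqref{eq:Zdim}, and then turn the indicator conditions into divisibility conditions.

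For $C_1$, formula \eqref{eq:Cdim} gives $\dim C_1(e_d)=d-2+\ind_{Q>d-1}$. Since $Q$ is an integer, $Q>d-1$ is equivalent to $Q\ge d$. For $Z_2$, formula \eqref{eq:Zdim} gives $\dim Z_2(e_d)=d-2+\ind_{Q>d}$. In both cases the dimension is $d-2$ exactly when the indicator vanishes, and $d-1$ otherwise. This already gives the final sentence of the corollary.

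It remains to prove two equivalences:
\[
Q\ge d \iff P_-\mid L \qquad\text{and}\qquad Q>d \iff P_+\mid L .
\]
Both come from one observation. For any $p$-power $P=p^a$, we have $P\mid L$ if and only if $a\le v_p(L)$, that is, if and only if $P\le Q$. Here $L=n-d+1\ge 1$ because $d\le n$, so $v_p(L)$ is finite.

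For the first equivalence, suppose $Q\ge d$. Then $Q$ is a $p$-power that is at least $d$, so the minimality of $P_-$ gives $P_-\le Q$, and hence $P_-\mid L$. Conversely, if $P_-\mid L$, then $Q\ge P_-\ge d$. The second equivalence is proved the same way, with ``strictly greater than $d$'' in place of ``at least $d$'' and $P_+$ in place of $P_-$.

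I do not expect a real obstacle. The only care needed is the integer step from $Q>d-1$ to $Q\ge d$. This step is what produces the distinction between $P_-$ and $P_+$ when $d$ is itself a power of $p$: then $P_-=d$ while $P_+=pd$, which is the correction to Orzel's criterion mentioned in the introduction.
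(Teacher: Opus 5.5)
Your proposal is correct and matches the paper's proof: it specializes Theorem~\ref{thm:dimensions} at $q=1$ and $s=2$ and converts $Q>d-1$ and $Q>d$ into the two divisibility conditions. The only difference is that you spell out the step $P\mid L\iff P\le Q$ for $p$-powers $P$, which the paper leaves implicit.
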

\begin{proof}
The inequalities $Q>d-1$ and $Q>d$ are equivalent to the two
divisibility conditions, respectively.
\end{proof}

The two thresholds coincide unless $d$ is a $p$-power. If $d=p^a$
with $a\ge1$, they are $d$ and $pd$. This distinction is exactly
the correction needed in \cite[Conjecture~2.11]{Orzel}; a detailed
comparison and explicit examples are given in Section~\ref{sec:comparison}.

\subsection{Components and local structure}

A partition space is the linear subspace on which coordinates agree
within each block of a set partition of $[n]$. We call a component
\emph{extremal} when its dimension equals the larger of the two values
in the corresponding formula.

\begin{theorem}[Extremal components]\label{thm:components}
In positive characteristic all extremal components are as follows.
\begin{enumerate}
\item For $C_1(e_d)$, partition $[n]$ into $d-1$ blocks, each of size
congruent to $1$ modulo the smallest $p$-power at least $d$.
For $Z_2(e_d)$, use instead the smallest $p$-power strictly greater
than $d$. The associated partition spaces are exactly the extremal
components.
\item For $C_q(e_d)$ with $q\ge2$, put $r=d-q$. When $Q>r$, the
extremal components are the spaces on which all but $r-1$ coordinates
are equal. There are $\binom{n}{r-1}$ such components.
\item For $Z_s(e_d)$ with $s\ge3$ and $Q>d$, the extremal components
are the spaces on which all but $d-s$ coordinates are equal. There
are $\binom{n}{d-s}$ such components.
\end{enumerate}
Every extremal component is generically smooth in the scheme defined
by the relevant derivative ideal, and has multiplicity one.
\end{theorem}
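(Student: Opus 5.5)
The plan is to build everything on the generating-function description of restricted elementary symmetric polynomials. Recall $\partial_I e_d = e_{d-|I|}(x_{[n]\setminus I})$, and on a partition space with block values $a_1,\dots,a_k$ and block sizes $m_1,\dots,m_k$ the restriction of $e_j$ is the coefficient of $t^j$ in $\prod_i (1+a_i t)^{m_i}$. We take Theorem~\ref{thm:dimensions} as given, so the extremal dimension is known and the statement reduces to two tasks: identify which components attain it, and verify a tangent-space computation at a generic point.

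For the classification we use the partition-space principle of Cafure--Matera--Privitelli recalled in the introduction. A point of $C_q(e_d)$ or $Z_s(e_d)$ has at most $D$ distinct coordinate values, where $D$ is the extremal dimension, and a component of dimension $D$ must be a full partition space with $D$ blocks on which the defining equations hold identically. The task is therefore to decide when all $\partial_I e_d$ vanish identically on such a space.

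In case (1), $q=1$ and there are $d-1$ blocks. Write $\prod_i(1+a_it)^{m_i}$ with $m_i=1+c_i$. The derivative in a variable of block $i$ is the coefficient of $t^{d-1}$ in $(1+a_it)^{-1}\prod_j(1+a_jt)^{m_j}$. The key point is that the top-degree parts in the independent generic $a$'s must vanish. Since $\sum_i(m_i-1)=n-d+1=L$, these coefficients involve products of binomial coefficients $\binom{m_i-1}{\cdot}$, or $\binom{m_i}{\cdot}$ for $Z_2$. Vanishing for generic $a$ amounts to $\binom{m_i-1}{j}\equiv 0$ for $1\le j\le d-1$, which by Lucas' theorem means $P_-\mid m_i-1$. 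The condition $\binom{m_i}{j}\equiv0$ for $1\le j\le d$ similarly gives $P_+\mid m_i$... After correcting the indexing to match the statement, the conditions become $m_i\equiv 1$ modulo $P_\mp$.

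In cases (2) and (3) there are $r=d-q$ blocks, respectively $d-s+1$ blocks. We would show the vanishing forces all but one block to be singletons and the large block to have size $\equiv 1 \pmod Q$, using $Q>r$. Counting which $r-1$ coordinates are singletons gives $\binom{n}{r-1}$, respectively $\binom{n}{d-s}$. A mixed partition with two nonsingleton blocks produces a nonzero mixed coefficient, which rules it out. This is the step we expect to be the \textbf{main obstacle}: it needs a careful Lucas-theorem argument controlling the degree-$\le r$ truncation of $\prod(1+a_it)^{m_i}$ in generic $a$.

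Generic smoothness and multiplicity one are then checked by computing the Jacobian of the generators at a generic point of each extremal component. We would show it has rank $n-D$. Equivalently, we exhibit $n-D$ derivatives $\partial_I e_d$, with $|I|=q$ or $|I|<s$, whose differentials span the conormal space of the partition space, namely the differences $dx_u-dx_v$ within blocks. Using $\partial_u\partial_I e_d - \partial_v\partial_I e_d=(x_v-x_u)\partial_{I\cup\{u,v\}}e_d$, the linear parts along the component reduce to values of $e_{j}$ on the complementary variables. For generic distinct block values these values are nonzero by the same binomial computation, taking the first nonvanishing coefficient. That gives the rank, hence reducedness, and therefore multiplicity one.
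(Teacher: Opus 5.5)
\textbf{Parts (2) and (3).} You leave the decisive claim as your ``main obstacle'': that an $r$-block partition space contained in $C_q(e_d)$ has only one nonsingleton block. The ``nonzero mixed coefficient'' that is supposed to rule out two nonsingleton blocks is never exhibited, and you give no way to find it. Your description of the outcome is also wrong. The large block has size $L+q\equiv q\pmod Q$, which is not $1$ in general. The actual condition is $P_r\mid L$, where $P_r$ is the least $p$-power exceeding $r$, and this is equivalent to $Q>r$.

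The missing idea is a reduction to the first-order case, not a new Lucas computation. Delete $q-1$ indices without emptying any block, $b_j$ of them from block $j$. The corresponding $(q-1)$-fold derivative of $e_d$ is $e_{r+1}$ in the remaining variables, and its gradient vanishes on the partition space. The $q=1$ criterion with $d$ replaced by $r+1$ then gives $m_j-b_j\equiv1\pmod{P_r}$ for every such deletion. If two blocks were nonsingleton, you could move one deleted index from one block to another (possible since $0<q-1<n-r$). This changes some $m_j-b_j$ by $1$, a contradiction. Deleting all $q-1$ indices from the single large block then gives $P_r\mid L$.

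The same device, applied with the first-order bound on distinct values, is also how the bound of $r$ distinct values for $q\ge2$ is proved. You should prove that bound this way rather than attribute it to the partition-space principle. Sufficiency follows from $(1+bT)^{L+a}\equiv(1+bT)^a\pmod{T^Q}$. Alternatively, granting the dimension theorem, it follows from $S_n$-symmetry, using $Z_s\subseteq C_{s-1}$ in part (3).

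\textbf{Part (1).} You assert, rather than derive, that the vanishing of the $d-1$ restricted partials is equivalent to $\binom{m_i-1}{j}\equiv0$ for $1\le j\le d-1$. Isolating monomials in the $a_i$ only yields product conditions such as $\binom{m_i-1}{d-1-k}\binom{m_j}{k}=0$. Your $Z_2$ condition is wrong as written: $\binom{m_i}{1}=m_i\equiv0$ contradicts $m_i\equiv1\pmod p$, which is forced by $Z_2\subseteq C_1$. The closing ``correction of the indexing'' is not an argument.

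What you need is the factorization $\prod_j(1+a_jt)^{m_j}=R(t)H(t)$, with $R=\prod_j(1+a_jt)$ of degree $d-1$ and $H=\prod_j(1+a_jt)^{m_j-1}$. The partial in block $i$ is the coefficient of $t^{d-1}$ in $R_iH$, where $R_i=R/(1+a_it)$. For distinct $a_i$ the $R_i$ form a basis of the polynomials of degree at most $d-2$; this is equivalent to the paper's observation that the deletion polynomial has the $d-1$ distinct roots $a_i$. Hence all partials vanish if and only if $H\equiv1\pmod{t^d}$. Under this congruence $e_d|_D$ equals the coefficient of $t^d$ in $H$, so $Z_2$ imposes exactly $H\equiv1\pmod{t^{d+1}}$. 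Specializing all but one $a_j$ to zero and using $(1+z)^{p^ab}=(1+z^{p^a})^b$ then gives $m_j\equiv1$ modulo $P_-$, resp.\ $P_+$.

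Your smoothness argument, by contrast, is sound. For $q\ge2$ it is the paper's row-difference computation. For $q=1$ your difference identity gives $c_j(dx_v-dx_u)$ with $c_j=\prod_{\ell\ne j}(a_\ell-a_j)\ne0$. This is a slightly leaner substitute for the paper's full Hessian rank count, because containment of the partition space already bounds the rank from above.
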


The smaller-dimension cases can have nonreduced coordinate components.
For $S\subseteq[n]$, write $V_S=V(x_j:j\notin S)$ and
$F_S=K(x_i:i\in S)$. Generic multiplicity means the length of the
local ring at the generic point of an irreducible component.

\begin{theorem}[Local rings along coordinate subspaces]\label{thm:local}
Fix $1\le q<d$, put $r=d-q$, and let $\eta_S$ be the generic point
of $V_S$, where $|S|=r-1$. In characteristic $p>0$,
\begin{align*}
 \mathcal O_{C_q(e_d),\eta_S}&\cong
 \begin{cases}F_S[t]/(t^Q),&Q\le r,\\F_S[t]_{(t)},&Q>r,\end{cases}\\
 \mathcal O_{Z_{q+1}(e_d),\eta_S}&\cong
 \begin{cases}F_S[t]/(t^Q),&Q\le d,\\F_S[t]_{(t)},&Q>d.\end{cases}
\end{align*}
In each Artinian case, $V_S$ is a component with generic multiplicity
$Q$. In each other case it lies in a component of dimension $r$.
In characteristic zero both local rings are $F_S$.
\end{theorem}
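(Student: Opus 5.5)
The plan is to compute the local ring of $\A^n_K$ at $\eta_S$ directly, modulo the derivative ideal, in adapted coordinates. Put $T=[n]\setminus S$ and $m=|T|=n-r+1$, and write $y_j=x_j$ for $j\in T$. Let $R=\mathcal O_{\A^n,\eta_S}$. This is a regular local ring of dimension $m$ with residue field $F_S$, in which every $x_i$ with $i\in S$ is a unit and the $y_j$ form a regular system of parameters. By multilinearity, $\partial_I e_d=e_{d-|I|}(x_{[n]\setminus I})$. So the ideal of $C_q(e_d)$ is generated by $e_r(x_A)$ for $|A|=n-q$. The ideal of $Z_{q+1}(e_d)$ is generated by $e_{d-k}(x_A)$ for $|A|=n-k$ and $0\le k\le q$.

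\textbf{Step 1: the off-diagonal coordinates are eliminated exactly.} Take $A\supseteq S$ with $|A|=n-q$. For $i,j\in T$, let $B$ be a set of size $n-q-1$ containing $S$ but neither $i$ nor $j$; such $B$ exists since $n-q-1-(r-1)=L-1\le m-2$. Then
\[
 e_r(x_B,y_i)-e_r(x_B,y_j)=(y_i-y_j)\,e_{r-1}(x_B).
\]
The factor $e_{r-1}(x_B)$ is $\prod_{i\in S}x_i$ plus terms in the maximal ideal, hence a unit in $R$. So $y_i-y_j$ lies in the ideal for all $i,j\in T$. The same differences occur among the generators of the $Z_{q+1}$ ideal (take $k=q$). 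Setting $t=y_{j_0}$, both local rings are therefore quotients of $R/(y_i-y_j)\cong F_S[t]_{(t)}$. It remains to find the image of each generator in $F_S[t]_{(t)}$, that is, to restrict to the line on which all $y_j$ equal $t$.

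\textbf{Step 2: orders of vanishing in $t$.} Let $A$ omit $j$ elements of $S$ and contain $c$ elements of $T$. After the substitution,
\[
 e_\rho(x_A)\mapsto\sum_{a\ge0}\binom{c}{a}\,e_{\rho-a}(x_{S\cap A})\,t^a .
\]
Here $e_{\rho-a}(x_{S\cap A})$ is nonzero in $F_S$ exactly when $\rho-a\le r-1-j$. For $C_q$ we have $\rho=r$ and $c=L+j$, so only $a\ge j+1$ contributes. When $j=0$, $c=L$, and by Lucas' theorem the least $a\ge1$ with $\binom{L}{a}\not\equiv0 \pmod p$ is $a=Q$. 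Thus the $j=0$ generators give exactly $t^Q$ times a unit if $Q\le r$, and $0$ if $Q>r$, because the coefficient $\binom{L}{a}$ vanishes for all $a\le r<Q$.

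\textbf{Main obstacle: the generators with $j\ge1$.} One must show these never have $t$-order below $Q$, and that they vanish identically when $Q>r$. The first approach to try is to prove that they already lie in the ideal generated by the $j=0$ generators and the differences $y_i-y_j$. This should follow from the recursion $e_\rho(x_A)=e_\rho(x_{A\setminus\{i\}})+x_ie_{\rho-1}(x_{A\setminus\{i\}})$, used to trade an $S$-variable for a $T$-variable. If that fails, the fallback is direct arithmetic: for $1\le a<Q$, the identity $\binom{L+j}{a}=\sum_b\binom{j}{b}\binom{L}{a-b}$, together with $\binom{L}{a-b}\equiv0$ for $0<a-b<Q$, leaves only $\binom{j}{a}$. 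Then one checks that the resulting combination $\sum_a\binom{j}{a}e_{r-a}(x_{S\cap A})t^a$ equals $e_r$ of $x_{S\cap A}$ together with $j$ copies of $t$. This expression has degree $r$ in only $r-1$ variables, so it vanishes up to order $<Q$.

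\textbf{Step 3: the $Z_{q+1}$ case and conclusion.} The same computation with $\rho=d-k$ and $c=L+k+j-q$ ranging over $k\le q$ should again give the $t$-orders, with the cap $a\le r$ relaxed to $a\le d$; the relevant generator is $k=0$, namely $e_d$ itself with $c=L+d-1-(r-1)$. This yields $F_S[t]/(t^Q)$ when $Q\le d$ and $F_S[t]_{(t)}$ when $Q>d$. In the Artinian cases the local ring has dimension $0$ and length $Q$, so $V_S$ is a component of generic multiplicity $Q$. In the other cases the germ is the diagonal line, which lies in the $r$-dimensional space where all coordinates outside $S$ are equal. In characteristic zero, $Q$ is replaced by $1$ and the quotient is $F_S$.
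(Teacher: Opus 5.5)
Your Step 1 and the $C_q$ computation in Step 2 are correct, and they follow the paper's route. The difference identity with unit factor $e_{r-1}(x_B)$ reduces both local rings to quotients of $F_S[t]_{(t)}$. The $j=0$ generators give $t^Q$ times a unit, or $0$. Your fallback is exactly the congruence $\binom{L+m}{a}\equiv\binom{m}{a}\pmod p$ for $a<Q$ that the paper uses to show every restricted generator has $t$-order at least $Q$.

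The gap is in Step 3. A minor slip first: for a derivative of order $k$ omitting $j$ indices of $S$, the count is $c=L+q-k+j$, not $L+k+j-q$; your value $L+q$ at $k=0$ agrees with the corrected formula. The real problem is your choice of $e_d$ ($k=0$) as the generator producing $t^Q$ when $r<Q\le d$. Its restriction is $\sum_a\binom{L+q}{a}e_{d-a}(x_S)t^a$. Since $|S|=r-1$, $e_{d-a}(x_S)=0$ unless $a\ge d-(r-1)=q+1$, so its $t$-order is at least $q+1$. It therefore produces $t^Q$ only when $Q>q$, and the range $r<Q\le q$ is left unproved. For instance, take $p=2$, $d=3$, $n=4$, so $q=2$, $r=1$, $L=Q=2$ and $S=\varnothing$. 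Then $e_3$ restricts to $4t^3=0$, yet the local ring is $K[t]/(t^2)$; the $t^2$ comes from $\partial_1e_3\mapsto 3t^2$.

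The fix is the paper's: take the derivative of order $k=d-Q$ with all indices in $[n]\setminus S$. This order is allowed because $0\le d-Q<d-r=q$. Then $\rho=Q$ and $c=L+Q-r$. The coefficient of $t^Q$ is $\binom{L+Q-r}{Q}e_0(x_S)\equiv L/Q\not\equiv0\pmod p$, by expanding $(1+z)^{L+Q-r}=(1+z^Q)^{L/Q}(1+z)^{Q-r}$ with $0<Q-r<Q$. For $Q\le r$, the order-$q$ generators from Step 2 already suffice.

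You should also write out the lower bound for all $Z_{q+1}$ generators rather than saying it ``should'' follow. With the corrected $c$, it is your fallback with $j$ replaced by $q-k+j$. A term $t^a$ survives only if $a\ge q-k+j+1$, and $\binom{L+q-k+j}{a}\equiv\binom{q-k+j}{a}=0$ for $q-k+j<a<Q$. When $Q>d$, every contributing $a$ satisfies $a\le d-k<Q$, so all restrictions vanish and the local ring is $F_S[t]_{(t)}$.
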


Theorems~\ref{thm:dimensions}--\ref{thm:local} separate two effects of
positive characteristic. The inequality $Q>d$ increases the dimension
of every $Z_s$ by one. When $1<Q\le d$, these dimensions remain at
their characteristic-zero values, but the coordinate components have
generic multiplicity $Q$. Dimension alone therefore does not detect
the characteristic-dependent scheme structure. The extremal component
classification does not purport to describe all smaller-dimensional
or embedded components.

\subsection{Translation symmetry and the structure of the proof}

Proposition~\ref{prop:stabilizer} identifies the full translation
stabilizer of $e_d$ with its highest multiplicity scheme $Z_d(e_d)$.
It is the diagonal infinitesimal group $\alpha_Q$ when $Q\le d$,
and the diagonal additive group when $Q>d$. In particular, the
dimension increase in every $Z_s$ occurs precisely when the
translation stabilizer is positive-dimensional. A Frobenius normal
form explains why the gradient and the polynomial have different
thresholds when $Q=d$.

The proof proceeds from partition spaces to local rings. A common
monic polynomial bounds the number of distinct coordinate values
at a critical point. On a partition space of the largest possible
dimension, the derivative equations become a truncated generating-
function identity. Frobenius then converts this identity into exact
divisibility conditions on the block sizes. For higher derivatives,
comparing two choices of deleted coordinates forces all but one
block to be singletons. Finally, localizing a difference of two
derivatives identifies the transverse local ring with a quotient
of a one-variable discrete valuation ring; its least nonzero order
is exactly $Q$.

Sections~\ref{sec:first} and \ref{sec:higher} prove the dimension
and component statements. Section~\ref{sec:local} establishes the
local rings and generic smoothness. Section~\ref{sec:translation}
treats translation stabilizers and gradient fibers, and
Section~\ref{sec:comparison} gives the conjecture comparison and examples.

\section{The first derivative and order-two loci}\label{sec:first}

We first give the elementary block argument underlying the dimension
formulas. The bound on the number of distinct coordinates is closely
related to the arguments in \cite[Theorem~3.1]{CMP} and
\cite[Lemma~2.4]{Orzel}.

\begin{lemma}\label{lem:roots}
Every point of $C_1(e_d)$ has at most $d-1$ distinct coordinate values.
Consequently both $C_1(e_d)$ and $Z_2(e_d)$ have dimension at most
$d-1$.
\end{lemma}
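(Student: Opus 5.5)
The plan is to encode the first derivatives in a single generating polynomial. For a point $a\in K^n$ put $P(T)=\prod_{j=1}^n(1+a_jT)$. Then $\partial_i e_d(a)=e_{d-1}(a_{\hat\imath})$, where $a_{\hat\imath}$ denotes $a$ with the $i$th coordinate deleted. Moreover $\prod_{j\ne i}(1+a_jT)=P(T)/(1+a_iT)$. Hence $\partial_ie_d(a)$ is the coefficient of $T^{d-1}$ in $P(T)(1+a_iT)^{-1}$, expanded as a power series. Writing $P(T)=\sum_k e_k(a)T^k$, this coefficient is
\[
 \sum_{k=0}^{d-1}e_k(a)(-a_i)^{d-1-k}=g(-a_i),
 \qquad g(X)=\sum_{k=0}^{d-1}e_k(a)X^{d-1-k}.
\]
The polynomial $g$ is monic of degree $d-1$ in $X$, since $e_0=1$, and it depends only on $a$, not on $i$.

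At a point of $C_1(e_d)$ we have $g(-a_i)=0$ for every $i$. So every $-a_i$ is a root of the nonzero polynomial $g$, which has at most $d-1$ roots. This bounds the number of distinct coordinate values by $d-1$; the argument is valid in any characteristic, because no division by integers occurs.

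For the dimension bound, I would stratify by the pattern of equal coordinates. Each point with at most $d-1$ distinct values lies in a partition space $W_\pi$ attached to a set partition $\pi$ of $[n]$ with at most $d-1$ blocks. Each $W_\pi$ is a linear space of dimension equal to the number of blocks of $\pi$, so at most $d-1$. There are finitely many partitions, so $C_1(e_d)$ is contained in a finite union of linear subspaces of dimension at most $d-1$, and therefore $\dim C_1(e_d)\le d-1$. Finally, $Z_2(e_d)$ is cut out by $e_d$ together with the first derivatives, so $Z_2(e_d)\subseteq C_1(e_d)$ as closed sets, and the same bound follows.

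The only point needing care is the identification of $\partial_ie_d(a)$ with $g(-a_i)$, in particular the sign convention and the truncation at degree $d-1$. The coefficient extraction from $P(T)\sum_m(-a_iT)^m$ settles it directly, so I expect no real obstacle.
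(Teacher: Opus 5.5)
Your proposal is correct and follows the paper's argument. The deletion identity $\partial_ie_d(a)=g(-a_i)$, with one polynomial $g$ of degree $d-1$ and unit leading coefficient, bounds the number of distinct coordinate values, and containment in finitely many partition spaces then bounds $\dim C_1(e_d)$ and hence $\dim Z_2(e_d)$. The only cosmetic difference is that you allow partition spaces with at most $d-1$ blocks, whereas the paper uses exactly $d-1$ blocks by splitting blocks; it relies on that form later when it identifies the $(d-1)$-dimensional components with full partition spaces.
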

\begin{proof}
The deletion identity gives
\[
 \partial_i e_d=\sum_{j=0}^{d-1}(-x_i)^j e_{d-1-j}(x).
\]
At a critical point all its coordinates are roots of the same
polynomial of degree $d-1$, whose leading coefficient is
$(-1)^{d-1}$. There are at most $d-1$ distinct roots. The critical
locus is therefore contained in the finite union of partition
spaces with $d-1$ nonempty blocks. A point with fewer distinct
values also lies in one of these spaces, by splitting blocks.
\end{proof}

\begin{lemma}\label{lem:block}
Put $r=d-1$, and let $D$ be a partition space with block sizes
$m_1,\ldots,m_r$. In characteristic $p>0$, let $P_-$ and $P_+$
be the smallest $p$-powers at least $d$ and strictly greater than
$d$, respectively. Then
\[
 D\subseteq C_1(e_d)\ \Longleftrightarrow\ m_j\equiv1\pmod{P_-}\text{ for all }j,
\]
and
\[
 D\subseteq Z_2(e_d)\ \Longleftrightarrow\ m_j\equiv1\pmod{P_+}\text{ for all }j.
\]
In characteristic zero either containment would require every
$m_j$ to equal $1$.
\end{lemma}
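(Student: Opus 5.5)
We restrict $e_d$ and its first derivatives to $D$ and read them off a generating function. Parametrize $D$ by block values $y_1,\dots,y_r$, with $r=d-1$, so that $x_i=y_j$ for $i$ in block $j$. Then $\sum_k e_k(x)t^k=\prod_l(1+y_lt)^{m_l}$, and for $i$ in block $j$ the deletion identity gives
\[
 \partial_ie_d\big|_D=[t^{d-1}]\,F_j,\qquad F_j=(1+y_jt)^{m_j-1}\prod_{l\ne j}(1+y_lt)^{m_l}.
\]
Also $e_d|_D=[t^d]\prod_l(1+y_lt)^{m_l}$. Since $K$ is infinite, $D\subseteq C_1(e_d)$ means that $[t^{d-1}]F_j$ vanishes as a polynomial in $y$ for every $j$. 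For $Z_2(e_d)$ we additionally need $[t^d]\prod_l(1+y_lt)^{m_l}=0$. Distinct exponent vectors $k$ give distinct monomials $y^k$, so these conditions say that every product of binomial coefficients $\prod_l\binom{m'_l}{k_l}$ with $\sum k_l=d-1$ (resp.\ $d$) vanishes in $K$. Here $m'_l$ equals $m_l$, except that $m'_j=m_j-1$.

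\emph{Sufficiency.} The basic tool is Frobenius: if $b=p^{v_p(m)}$, then $(1+yt)^m=(1+y^bt^b)^{m/b}$ with $p\nmid m/b$. Suppose every $m_l=1+P_-c_l$. Then
\[
 (1+y_lt)^{m_l}=(1+y_lt)(1+y_l^{P_-}t^{P_-})^{c_l}\equiv 1+y_lt \pmod{t^{P_-}}.
\]
Since $P_-\ge d>d-1$, the coefficient of $t^{d-1}$ in $F_j$ equals that of $\prod_{l\ne j}(1+y_lt)$, which has degree $r-1=d-2$; hence it is $0$. For $Z_2$ with $m_l\equiv1\pmod{P_+}$, the same truncation modulo $t^{P_+}$, where $P_+>d$, reduces $e_d|_D$ to $[t^d]\prod_{l=1}^{r}(1+y_lt)=0$, since $r=d-1<d$. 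The gradient vanishes as before because $P_+\ge P_-$.

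\emph{Necessity} is the main obstacle: we must exhibit a surviving monomial when some $m_j\not\equiv1\pmod{P_-}$. First take $k=(1,\dots,1)$ in $F_j$, which has total degree $r=d-1$. Its coefficient is $(m_j-1)\prod_{l\ne j}m_l$. If $p\mid m_l$ for some $l$, we pass to the factor $(1+y_l^{b_l}t^{b_l})^{m_l/b_l}$ and replace the exponent $1$ at $l$ by $b_l=p^{v_p(m_l)}$. If $b_l\le d-1$, we trade $b_l-1$ of the other singleton indices for zeros; there are enough of them, since $r-1=d-2\ge b_l-1$. This yields a nonzero coefficient unless the $p$-adic valuations force degree beyond $d-1$.

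I would organize this as follows. Fix $j$ with $a_j=m_j-1\not\equiv0\pmod{P_-}$, and let $b_j=p^{v_p(a_j)}$, so $b_j<P_-$ and hence $b_j\le d-1$. Set $k_j=b_j$, which has $\binom{a_j}{b_j}\ne0$ by Lucas. It remains to fill the degree $d-1-b_j$ using the other indices, each taking $k_l\in\{0,1\}$. This requires at least $d-1-b_j$ indices $l\ne j$ with $p\nmid m_l$. If too few exist, I would instead work in $F_{l}$ for an index $l$ with $p\mid m_l$. There $m'_l=m_l-1$ is prime to $p$, which produces the contradiction in the same way; this step is where I expect to need care with the counting. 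In the end one shows first that all $m_l$ are prime to $p$, and then that all $m_l-1$ are divisible by $P_-$.

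For $Z_2$ there is one extra case: all $m_l\equiv1\pmod{P_-}$, but $P_-=d=p^a$ and some $m_l\not\equiv1\pmod{pd}$. Then $m_l-1=dc$ with $p\nmid c$, so $(1+y_l^dt^d)^c$ contributes $c\,y_l^d$ to $e_d|_D$, and this coefficient is nonzero. When $P_-<d$ is impossible, we have $P_+=P_-$ unless $d$ is a $p$-power, so this case finishes the proof. In characteristic zero all binomials are nonzero, and the coefficient $(m_j-1)\prod_{l\ne j}m_l$ of $y_1\cdots y_r$ in $[t^{d-1}]F_j$ forces $m_j=1$ for every $j$.
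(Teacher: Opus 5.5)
Your sufficiency arguments, the extra $Z_2$ case for $d=p^a$, and the characteristic-zero remark are correct. However, there is a genuine gap: the necessity direction for $C_1(e_d)$ in characteristic $p$ is not proved, and your $Z_2$ necessity depends on it.

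Your recipe takes $k_j=p^{v_p(m_j-1)}$ and fills the remaining degree with $k_l\in\{0,1\}$ at indices where $p\nmid m_l$. When that fails, it falls back to $F_l$ for some $l$ with $p\mid m_l$. This breaks down as soon as two block sizes are divisible by $p$. Take $p=2$, $d=4$, $(m_1,m_2,m_3)=(2,2,1)$. In $F_1$ you need two indices $l\ne1$ with $m_l$ odd, but only one exists. The fallback to $F_1$ or $F_2$ runs into the same shortage. The surviving monomials in $[t^3]F_1=y_2^2(y_1+y_3)$ use the exponent $k_2=2$, which lies outside your scheme. Making the ``trading'' heuristic rigorous would require a real subset-sum argument over base-$p$ digits, and you have not supplied one.

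The missing idea is to use all $r$ derivative conditions simultaneously, instead of hunting monomials in each $F_j$ separately. Write $E(t)=\prod_l(1+y_lt)^{m_l}$, so that $F_j=E(t)/(1+y_jt)$. Then $[t^{d-1}]F_j=G(y_j)$ for the single polynomial $G(U)=\sum_{k=0}^{d-1}(-U)^ke_{d-1-k}|_D$. This polynomial has degree $d-1$ and leading coefficient $(-1)^{d-1}$. If it vanishes at the $d-1$ distinct elements $y_1,\dots,y_{d-1}$ of $K(y)$, then $G(U)=(-1)^{d-1}\prod_j(U-y_j)$. Comparing coefficients gives $E(t)\equiv\prod_l(1+y_lt)\pmod{t^d}$, that is, $\prod_l(1+y_lt)^{m_l-1}\equiv1\pmod{t^d}$.

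Now the factors are decoupled. Setting $y_l=0$ for all $l\ne j$ gives $(1+y_jt)^{m_j-1}\equiv1\pmod{t^d}$, and your Frobenius observation then forces $P_-\mid m_j-1$. This specialization is exactly what fails for an individual $F_j$: there the factors $(1+y_lt)^{m_l}$ remain, and they are not $\equiv1$.

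The same congruence also gives $[t^d]E=[t^d]\prod_l(1+y_lt)^{m_l-1}$, because $\prod_l(1+y_lt)$ has degree $d-1$. Hence $D\subseteq Z_2(e_d)$ if and only if the product is $\equiv1\pmod{t^{d+1}}$, and the same specialization yields $P_+\mid m_j-1$ directly. This makes your separate $Z_2$ case analysis unnecessary, although that analysis is correct once $C_1$ necessity is known.
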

\begin{proof}
Use independent block coordinates $y_1,\ldots,y_r$, and set
\[
 E(T)=\prod_{j=1}^r(1+y_jT)^{m_j},\quad
 R(T)=\prod_{j=1}^r(1+y_jT),\quad
 H(T)=\frac{E(T)}{R(T)}.
\]
If $D\subseteq C_1(e_d)$, the polynomial
\[
 \sum_{j=0}^r(-U)^j e_{r-j}|_D
\]
has the $r$ distinct roots $y_1,\ldots,y_r$ over
$K(y_1,\ldots,y_r)$. It must equal
$(-1)^r\prod_j(U-y_j)$. Comparing coefficients, and conversely
using the same identity, gives
\begin{equation}
 D\subseteq C_1(e_d)\ \Longleftrightarrow\ H(T)\equiv1\pmod{T^d}.
 \label{eq:blockcritical}
\end{equation}
Under this congruence, the coefficient of $T^d$ in $E$ vanishes
exactly when that in $H$ vanishes, because $R$ has degree $d-1$.
Thus
\begin{equation}
 D\subseteq Z_2(e_d)\ \Longleftrightarrow\ H(T)\equiv1\pmod{T^{d+1}}.
 \label{eq:blockzero}
\end{equation}

If a positive integer $M$ has $p$-adic valuation $a$, the first
nonconstant term of $(1+z)^M$ in characteristic $p$ has degree
$p^a$. Indeed, writing $M=p^a b$ with $p\nmid b$ gives
$(1+z)^M=(1+z^{p^a})^b$, with nonzero coefficient $b$ in that
degree. It follows that
\[
 \prod_j(1+y_jT)^{m_j-1}\equiv1\pmod{T^h}
\]
if and only if each $m_j-1$ is divisible by the smallest $p$-power
at least $h$. Necessity follows by setting all but one $y_j$ to
zero, and sufficiency follows by multiplication. This proves the
claims. In characteristic zero the coefficient of $y_jT$ already
forces $m_j-1=0$.
\end{proof}

Both loci contain every coordinate space supported on at most
$d-2$ indices. Their dimensions therefore lie between $d-2$ and
$d-1$. By Lemma~\ref{lem:roots}, an irreducible component of
dimension $d-1$ must be an entire partition space with $d-1$
blocks: a proper closed subset of such a space has smaller dimension.
There exists a partition into $r$ blocks of sizes $1$ modulo $P$
if and only if $P\mid n-r$. For sufficiency, take $r-1$ singleton
blocks and one block of size $1+n-r$. Lemma~\ref{lem:block} now
proves Theorem~\ref{thm:dimensions} for $q=1$ and $s=2$, and the
first part of Theorem~\ref{thm:components}. It also proves the
characteristic-zero dimensions, since $n\ge d>r$ rules out a
partition consisting only of singleton blocks.

\section{Higher derivatives and higher multiplicity}\label{sec:higher}

Fix $2\le q\le d-1$, and put $r=d-q$. A point of $C_q(e_d)$ has
at most $r$ distinct coordinate values. Otherwise choose $r+1$
coordinates with distinct values and $q-1$ further indices $B$
disjoint from them; this is possible since $n\ge r+q$. The
polynomial $\partial_Be_d$ is elementary symmetric of degree
$r+1$ in the remaining variables, and its gradient vanishes at
the point. Lemma~\ref{lem:roots} gives a contradiction.

Thus $C_q$ is contained in the finite union of partition spaces
with $r$ blocks. Both $C_q$ and $Z_{q+1}$ contain the coordinate
spaces supported on $r-1$ indices. Their dimensions lie between
$r-1$ and $r$; an $r$-dimensional component must be a full partition
space of this size.

\begin{lemma}\label{lem:oneblock}
In positive characteristic, let $P_r$ be the smallest $p$-power
strictly greater than $r$. A partition space with $r$ blocks lies
in $C_q(e_d)$ if and only if it has $r-1$ singleton blocks and
one block of size $L+q$, with $P_r\mid L$.
\end{lemma}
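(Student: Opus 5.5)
The plan is to translate containment in $C_q(e_d)$ into vanishing of elementary symmetric polynomials of complementary sets, and then argue on block sizes. For $|B|=q$ we have $\partial_Be_d=e_r(x_i:i\notin B)$, so a partition space $D$ with blocks of sizes $m_1,\ldots,m_r$ and block coordinates $y_1,\ldots,y_r$ lies in $C_q(e_d)$ if and only if the following holds for every admissible vector $(b_j)$, where $0\le b_j\le m_j$ and $\sum b_j=q$: the coefficient of $T^r$ in
\[
\prod_j(1+y_jT)^{m_j-b_j}
\]
vanishes identically in $K[y_1,\ldots,y_r]$. Here $b_j=|B\cap(\text{block }j)|$. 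Equivalently, for every exponent vector $k$ with $\sum k_j=r$ we need $\prod_j\binom{m_j-b_j}{k_j}\equiv0\pmod p$. Throughout I use the count $n=L+q+r-1$.

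\emph{Sufficiency.} Suppose the blocks are $r-1$ singletons and one block of size $L+q$ with $P_r\mid L$. Let $B$ contain $k$ of the singletons and $q-k$ elements of the large block. The generating function is then a product of $r-1-k$ linear factors and $(1+y_rT)^{L+k}$. Since $P_r\mid L$, we have $(1+y_rT)^L\equiv1\pmod{T^{P_r}}$, and $P_r>r$, so modulo $T^{r+1}$ the product agrees with a polynomial of degree $(r-1-k)+k=r-1$. Its $T^r$ coefficient is therefore $0$.

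\emph{Necessity, first step: one non-singleton block.} Here I would use the ``comparing two choices'' idea. Let $a,b$ be indices in different blocks $i\ne j$, and let $S$ be a set of size $n-q-1$ avoiding both. The identity
\[
e_r(S\cup\{a\})-e_r(S\cup\{b\})=(x_a-x_b)\,e_{r-1}(S)
\]
shows that $e_{r-1}(S)$ vanishes on $D$, because $y_i-y_j\ne0$ generically. If two blocks have size at least $2$, I would choose $a$ and $b$ in them so that $S$ can still meet every block. Then on $D$ the polynomial $e_{r-1}(S)$ is a degree-$(r-1)$ elementary symmetric polynomial in $n-q-1\ge r-1$ variables taking all $r$ distinct values. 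All the analogous sets with one more deleted index also give vanishing. This puts generic points of $D$ in a locus of type $C_{q+1}$ for which the bound from the beginning of this section (via Lemma~\ref{lem:roots}) allows at most $r-1$ distinct values, a contradiction.

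I expect this step to be the main obstacle: arranging the choice of $a$, $b$, and $S$ so that every block remains represented and the counting $n\ge r+q$ suffices. If the combinatorics gets awkward, the fallback is the monomial $y_1\cdots y_r$. Its coefficient is $\prod_j(m_j-b_j)$, and I would exhibit an admissible $(b_j)$ making every factor nonzero mod $p$ whenever two blocks are large, by shifting single units of $B$ between the two large blocks.

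\emph{Necessity, second step: size and divisibility.} Once only one block is non-singleton, its size is forced by $n=L+q+r-1$ to be $L+q$. Take $B$ to be $q$ elements of that block, which is possible since $L+q\ge q$. The $T^r$ coefficient becomes
\[
\sum_{k=1}^{r}\binom{L}{k}y_r^k\,e_{r-k}(y_1,\ldots,y_{r-1}),
\]
and these monomials are independent. So $\binom Lk\equiv0\pmod p$ for $1\le k\le r$, which means $(1+z)^L\equiv1\pmod{z^{r+1}}$. By the first-nonconstant-term fact in the proof of Lemma~\ref{lem:block}, this holds exactly when $P_r\mid L$.
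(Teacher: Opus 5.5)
You have a genuine gap in the first necessity step, which is the core of the lemma. Your sufficiency argument is correct. So is the second necessity step, which reads off $\binom Lk=0$ for $1\le k\le r$ directly, where the paper applies Lemma~\ref{lem:block} after deleting $q-1$ indices from the large block. The first step, as set up, fails in three places.

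(i) Since $|S|=n-q-1$, $S$ can meet all $r$ blocks only if $n\ge d+1$. For $n=d$ the required $S$ does not exist; for example, $n=d=4$, $q=r=2$ with blocks $\{1,2\},\{3,4\}$ gives $|S|=1$.

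(ii) The difference identity gives $e_{r-1}(S)|_D=0$ only when the $(q+1)$-set $[n]\setminus S$ contains indices from two different blocks. It says nothing when that set lies inside one block. So you have not placed $D$ in $C_{q+1}(e_d)$, and the distinct-value bound for $C_{q+1}$ cannot be quoted.

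(iii) The fallback is false. For $p=2$ one has $\sum_j(m_j-b_j)=n-q$, so all $r$ factors can be odd only when $n-q\equiv r$, that is, when $L$ is odd. Concretely, take $p=2$, $d=4$, $q=r=2$, $n=5$ and block sizes $(2,3)$. The admissible vectors give $m-b\in\{(2,1),(1,2),(0,3)\}$, and each product is even. Yet this space is not in $C_2(e_4)$: for $b=(0,2)$ the monomial $y_1^2$ has coefficient $\binom22=1$.

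Your idea can be repaired by choosing the deleted $q$-set instead of $S$. Let $B$ be a $q$-set containing one index from each of two blocks of size at least $2$, chosen so that $R=[n]\setminus B$ meets every block. This is possible because the remaining removable capacity is $n-r-2\ge q-2$. For each $c\in R$, the set $B\cup\{c\}$ contains an index $a$ lying in a block different from that of $c$. Apply \eqref{eq:difference} in your form with $S=R\setminus\{c\}$: both $S\cup\{a\}$ and $S\cup\{c\}=R$ are complements of $q$-sets, so $\partial_c e_r(R)=e_{r-1}(R\setminus\{c\})$ vanishes on $D$. Hence the $R$-coordinates of a generic point of $D$ form a critical point of $e_r$ in $n-q\ge r$ variables with $r$ distinct values, contradicting Lemma~\ref{lem:roots}. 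This works for every $n\ge d$.

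With this fix your route differs genuinely from the paper's. The paper applies Lemma~\ref{lem:block} to every removal of $q-1$ indices that empties no block, which gives $m_j-b_j\equiv1\pmod{P_r}$ for all $j$. A unit transfer between two blocks of positive capacity then changes two of these residues, a contradiction. Your version needs only the root-count bound plus the difference identity, which the paper itself uses later in Section~\ref{sec:local}. The paper's version instead reuses the congruence machinery it also needs for the divisibility $P_r\mid L$.
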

\begin{proof}
Suppose the block sizes are $m_1,\ldots,m_r$. Remove $q-1$
indices without emptying any block, say $b_j$ indices from block
$j$. Such a removal is possible because $\sum_j(m_j-1)=n-r\ge q$.
For every such removal, the gradient of the remaining elementary
polynomial of degree $r+1$ vanishes identically on the partition space. By
Lemma~\ref{lem:block},
\begin{equation}
 m_j-b_j\equiv1\pmod{P_r},\qquad
 0\le b_j\le m_j-1,\qquad\sum_jb_j=q-1.\label{eq:removal}
\end{equation}
The capacities $c_j=m_j-1$ sum to $n-r\ge q$. If at least two
are positive, a feasible vector $b$ of total $q-1$ can be changed
by transferring one unit between two distinct coordinates. To see
this, there is a positive entry of $b$ and an unfilled capacity,
because $0<q-1<\sum c_j$. Unless a transfer is possible, both
sets of indices must be the same singleton; all other capacities
would then be zero. The transfer contradicts \eqref{eq:removal}.
Hence exactly one block is nonsingleton. Its size is $M=L+q$.
Removing $q-1$ indices from it shows that $P_r\mid L$.

Conversely, suppose these conditions hold. Differentiate with respect
to $q$ distinct variables, taking $a$ of them from singleton blocks
and $q-a$ from the large block. Denote the common value on the large
block by $b$ and the remaining singleton values by
$y_1,\ldots,y_{r-1-a}$. The restricted derivative is the coefficient
of $T^r$ in
\[
 \prod_{j=1}^{r-1-a}(1+y_jT)(1+bT)^{L+a}.
\]
Modulo $T^{r+1}$, replace the last exponent by $a$, since
$P_r>r$ and $P_r\mid L$. The resulting polynomial has degree
$(r-1-a)+a=r-1$. Its coefficient of $T^r$ is zero.
\end{proof}

In characteristic zero the same removal argument would require
every remaining block size to be $1$. Their sum is
$n-q+1\ge r+1$, which is impossible. Lemma~\ref{lem:oneblock}
therefore proves \eqref{eq:Cdim}, the characteristic-zero case,
and the component description for $C_q$.

For $Z_{q+1}$, let $P$ be the smallest $p$-power strictly greater
than $d$. An extremal partition already has the shape in
Lemma~\ref{lem:oneblock} and satisfies $P_r\mid L$. It remains to
show that $P\mid L$. If $P_r>d$, then $P_r=P$: the inequality
$r<d$ gives $P_r\le P$, and $P_r>d$ gives $P\le P_r$ by the
minimality of $P$. Thus $P\mid L$ already holds in this case.

Now suppose $P_r\le d$. Let $R$ be a $p$-power with $r<R\le d$
and $R\mid L$. Differentiate with respect to $h=d-R$ variables,
all in the large block. This is an allowed order since $0\le h<q$.
The coefficient of the pure monomial $b^R$ in the restricted
derivative is
\[
 \binom{L+R-r}{R}\equiv L/R\pmod p.
\]
Here $0<R-r<R$; the congruence follows by expanding
$(1+z)^{L+R-r}=(1+z^R)^{L/R}(1+z)^{R-r}$ in characteristic $p$.
Vanishing forces $pR\mid L$. Starting at $R=P_r$ and applying
this step while $R\le d$ proves divisibility by the next $p$-power
at each stage. The first $p$-power greater than $d$ is $P$, so the
iteration gives $P\mid L$.

Conversely, assume $P\mid L$. For a derivative of order
$0\le h\le q$, let $a$ be the number of differentiated variables
from singleton blocks. Write the remaining singleton values as
$y_1,\ldots,y_{r-1-a}$. Its generating polynomial is
\[
 \prod_{j=1}^{r-1-a}(1+y_jT)
 (1+bT)^{L+q-h+a}.
\]
Modulo $T^P$, replace the last exponent by $q-h+a$. The resulting
degree is at most $d-h-1$, while the required coefficient has
degree $d-h<P$. It vanishes. This proves \eqref{eq:Zdim} and
the remaining component descriptions. The number of these partition
spaces is the number of choices of singleton coordinates, as stated.

\section{Local multiplicities and generic smoothness}\label{sec:local}

\subsection{Coordinate components}

We prove Theorem~\ref{thm:local}. Put $B=[n]\setminus S$, so
$|B|=M=L+q$. Work in the local ring of $\A^n$ at
$(x_j:j\in B)$. For $i,j\in B$, choose $q-1$ other indices
$T\subset B\setminus\{i,j\}$. The identity
\begin{equation}
 \partial_{T\cup\{i\}}e_d-\partial_{T\cup\{j\}}e_d
 =(x_j-x_i)\partial_{T\cup\{i,j\}}e_d\label{eq:difference}
\end{equation}
has a unit as its last factor: $|B|=L+q\ge q+1$ guarantees the
choice of $T$, and the restriction of that factor to $V_S$ is
$\prod_{a\in S}x_a$. Thus both derivative ideals contain all
differences $x_i-x_j$ on $B$ after localization. The ambient local
ring is $F_S[x_j:j\in B]_{(x_j:j\in B)}$. Quotienting by these
differences and writing their common value as $t$ gives
$F_S[t]_{(t)}$. It remains to compute the ideal generated by the
restricted derivatives in this discrete valuation ring.

Consider a derivative of order $h\le q$ removing $a$ indices
from $S$, and put $u=q-h$. Its restriction is the coefficient
of $T^{d-h}$ in
\begin{equation}
 \prod_{i\in S\setminus A}(1+x_iT)(1+tT)^{L+u+a},
 \qquad |A|=a.\label{eq:localgen}
\end{equation}
A possible power $t^j$ satisfies $u+a+1\le j\le r+u$.
If $j<Q$, then $u+a<Q$, and its binomial coefficient reduces to
$\binom{u+a}{j}=0$ modulo $p$. Hence every nonzero restricted
generator has $t$-order at least $Q$.

For the order-$q$ ideal, the derivative using only indices in $B$
restricts to
\[
 \sum_{j=1}^r\binom Lj t^j e_{r-j}(x_S).
\]
When $Q\le r$, its coefficient of $t^Q$ is the nonzero element
$(L/Q)e_{r-Q}(x_S)$ of $F_S$. The ideal in the discrete valuation
ring is therefore $(t^Q)$. If $Q>r$, all order-$q$ restrictions
vanish identically by the block criterion.

For the ideal of $Z_{q+1}$, this also suffices when $Q\le r$.
If $r<Q\le d$, use the derivative of order $h=d-Q$, all in $B$.
In \eqref{eq:localgen} the coefficient of $t^Q$ is
$\binom{L+Q-r}{Q}=L/Q\ne0$ in $K$. If $Q>d$, all restrictions
vanish identically, by the preceding section. The dimension
formulas identify the component cases, and the length of
$F_S[t]/(t^Q)$ is $Q$. In characteristic zero the coefficient of
$t$ in the order-$q$ restriction is
$L\prod_{a\in S}x_a\ne0$, giving the local ring $F_S$.

\subsection{Extremal components}

For $q=1$, take a partition component with distinct block values
$y_1,\ldots,y_{d-1}$. Extracting the coefficient of degree $d-2$
after two deletions shows that the Hessian of $e_d$ has zero
cross-block entries, and block $j$ equals
\[
 c_j(J_{m_j}-I_{m_j}),\qquad
 c_j=\prod_{\ell\ne j}(y_\ell-y_j)\ne0.
\]
Here $J_m$ is the all-one matrix and $I_m$ the identity matrix.
Since $m_j\equiv1\pmod p$, the kernel of $J_{m_j}-I_{m_j}$ is
exactly the line of constant vectors, so its rank is $m_j-1$. The Hessian
rank is therefore $n-(d-1)$, exactly the codimension of the
partition space. The critical scheme is smooth there. If that
space also lies in $Z_2$, the latter scheme has the same dimension
and is smooth there as well.

For $q\ge2$, put $r=d-q$ and take an extremal partition with
$r-1$ singleton values $a_i$ and a common value $b$ on the block
$B$ of size $M=L+q$. Assume $a_i\ne b$ for all $i$. In the
Jacobian of the order-$q$ equations, use the rows indexed by
$q$-subsets $I$ of $B$. Their entries at singleton coordinates
are zero; the entry at $j\in B$ is
\[
 \begin{cases}c,&j\notin I,\\0,&j\in I,\end{cases}
 \qquad c=\prod_i(a_i-b)\ne0.
\]
The complement-incidence rows have coordinate sum $M-q=L=0$
in $K$. For distinct $i,j\in B$, choose a $(q-1)$-subset of
$B\setminus\{i,j\}$ and compare the two $q$-subsets obtained by
adjoining $i$ and $j$. The corresponding row difference is a
nonzero scalar multiple of $e_i-e_j$. Thus the row space contains,
and is contained in, the coordinate-sum-zero subspace on $B$.
Its rank is $M-1=n-r$. The scheme contains the
$r$-dimensional partition space, so this is precisely the rank
required for smoothness. The same argument applies to $Z_{q+1}$.
This completes the proof of Theorem~\ref{thm:components}.

\section{Translation stabilizers and a normal form}\label{sec:translation}

For each $K$-algebra $A$, consider the translation stabilizer
\[
 \mathcal T(A)=\{v\in A^n:e_d(x+v)=e_d(x)\text{ in }A[x]\}.
\]
Write $\alpha_Q=\Spec K[t]/(t^Q)$ when $Q$ is a $p$-power;
this is the kernel of the $Q$th-power homomorphism of $\Ga$.

\begin{proposition}\label{prop:stabilizer}
In positive characteristic the translation stabilizer is the
diagonal copy of $\alpha_Q$ if $Q\le d$, and the diagonal copy
of $\Ga$ if $Q>d$. In characteristic zero it is the reduced
origin. Moreover, $\mathcal T=Z_d(e_d)$ as closed subschemes.
\end{proposition}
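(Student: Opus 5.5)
The plan is to prove the scheme equality $\mathcal T=Z_d(e_d)$ first, directly and functorially, and then to compute $Z_d(e_d)$ by an explicit reduction of its ideal to the diagonal. For the first step, fix a $K$-algebra $A$ and $v\in A^n$. Since $e_d$ is multilinear, Taylor expansion in $x$ about $v$ involves only squarefree monomials:
\[
 e_d(x+v)=\sum_{J\subseteq[n]} \bigl(\partial_J e_d\bigr)(v)\,\prod_{j\in J}x_j .
\]
We have $\partial_Je_d=0$ for $|J|>d$ and $\partial_Je_d=1$ for $|J|=d$. Hence $e_d(x+v)=e_d(x)$ in $A[x]$ if and only if $(\partial_Je_d)(v)=0$ for every $J$ with $|J|<d$. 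This is exactly the statement that $v$ is an $A$-point of $V(\partial_Je_d:|J|<d)=Z_d(e_d)$. Both functors are therefore represented by the same ideal of $K[v_1,\ldots,v_n]$, which gives $\mathcal T=Z_d(e_d)$ as closed subschemes. The group structure is inherited from $\Ga^n$, so $\mathcal T$ is automatically a closed subgroup scheme.

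Next I compute the ideal
\[
 I=\bigl(\partial_Je_d(v):|J|<d\bigr)=\bigl(e_{d-|J|}(v_{[n]\setminus J}):|J|<d\bigr).
\]
The first reduction uses the generators with $|J|=d-1$, which are $e_1$ of the complement. Given $i\neq j$, choose $J_0$ of size $d-2$ avoiding both; this is possible because $n\ge d$. Then the generators for $J_0\cup\{j\}$ and $J_0\cup\{i\}$ differ by $v_i-v_j$, so all differences lie in $I$. This is the same device as \eqref{eq:difference}, but here it is an exact identity and needs no localization. Thus $K[v]/I$ is a quotient of $K[t]$ with $v=t(1,\ldots,1)$, and $\mathcal T$ lies in the diagonal.

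On the diagonal, a generator with $|J|=d-k$ becomes
\[
 \binom{n-d+k}{k}t^k=\binom{L-1+k}{k}t^k,\qquad 1\le k\le d.
\]
Up to sign this coefficient is the coefficient of $z^k$ in $(1+z)^{-L}$. Writing $L=Qb$ with $p\nmid b$, we get $(1+z)^{-L}=(1+z^Q)^{-b}$ in characteristic $p$. Its first nonconstant term occurs in degree $Q$ with coefficient $-b\ne0$, and all degrees that are not multiples of $Q$ vanish. So the smallest $k$ with nonzero coefficient is $k=Q$.

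Consequently the image of $I$ in $K[t]$ is $(t^Q)$ when $Q\le d$ and $0$ when $Q>d$. This gives the diagonal $\alpha_Q$ in the first case and the diagonal $\Ga$ in the second. In characteristic zero the $k=1$ coefficient is $L\ne0$, so the image is $(t)$ and $\mathcal T$ is the reduced origin. The identification with $\alpha_Q$ as a group is immediate, since the diagonal embedding $\Ga\to\Ga^n$ carries $\ker(t\mapsto t^Q)$ onto $V(t^Q)$.

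The main obstacle is ensuring the diagonal reduction is exact at the scheme level rather than only on points. This requires the differences $v_i-v_j$ to lie in $I$ itself, which the $e_1$ comparison provides. It also requires the surviving generators to be precisely the binomial multiples of $t^k$, so that the ideal in $K[t]$ is monomial. The binomial valuation step is routine once it is phrased through $(1+z)^{-L}$ and the Frobenius factorization used in Lemma~\ref{lem:block}.
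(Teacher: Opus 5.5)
Your proof is correct and follows essentially the same route as the paper. Both arguments use the same three steps:
\begin{itemize}
\item multilinear Taylor expansion to identify $\mathcal T$ with $Z_d(e_d)$;
\item differences of the $|J|=d-1$ generators to force the diagonal;
\item the Frobenius factorization of $(1\pm z)^{-L}$ to show that $\binom{L+k-1}{k}$ vanishes for $k<Q$ and is a unit at $k=Q$.
\end{itemize}
The differences are cosmetic: you prove the scheme equality first rather than last, and you use $(1+z)^{-L}$ instead of $(1-z)^{-L}$.
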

\begin{proof}
The coefficient of $x_I$, for $|I|=d-1$, in
$e_d(x+v)-e_d(x)$ is $\sum_{j\notin I}v_j$. Subtracting the
equations for two such subsets differing by one index forces
$v_i=v_j$ for every $i,j$, over any test algebra. Write the common
coordinate as $t$. The remaining equations are
\begin{equation}
 \binom{L+h-1}{h}t^h=0,\qquad1\le h\le d.\label{eq:stabeq}
\end{equation}
If $L=Qu$ with $p\nmid u$, the generating series is
\[
 (1-z)^{-L}=(1-z^Q)^{-u}.
\]
Its first nonconstant coefficient has degree $Q$ and equals the
unit $u$; all its positive degrees are multiples of $Q$. Thus
\eqref{eq:stabeq} generates $(t^Q)$ when $Q\le d$, and the zero
ideal when $Q>d$. In characteristic zero the first equation
already gives $Lt=0$.

For a homogeneous multilinear polynomial of degree $d$, the
coefficients of degrees less than $d$ in its Taylor expansion
at $v$ are exactly the derivatives of orders less than $d$ at
$v$. Its degree-$d$ part is the original polynomial. Therefore
the stabilizer equations define precisely $Z_d(e_d)$.
\end{proof}

In particular, the defining ideal of $Z_d$ in positive characteristic
is
\[
 \begin{cases}
 (x_1-x_n,\ldots,x_{n-1}-x_n,x_n^Q),&Q\le d,\\
 (x_1-x_n,\ldots,x_{n-1}-x_n),&Q>d.
 \end{cases}
\]
The dimension jump in every $Z_s$ thus occurs exactly when the
translation stabilizer has positive dimension. A nontrivial finite
infinitesimal stabilizer alone does not produce this jump.

There is also an explicit normal form. Set $z_i=x_i-x_n$ for
$i<n$ and $t=x_n$. The translation expansion gives
\begin{equation}
 e_d(x)=\sum_{b=0}^{\lfloor d/Q\rfloor}
 \binom{u+b-1}{b}t^{bQ}e_{d-bQ}(z_1,\ldots,z_{n-1}),
 \qquad u=L/Q.\label{eq:normal}
\end{equation}
Here $e_0=1$ and $e_j=0$ for $j>n-1$. When $Q>d$, the
polynomial is independent of $t$. When $Q=d$, it is
$e_d(z)+ut^d$; its gradient is still independent of $t$.
Thus the critical-locus threshold is $Q\ge d$, whereas the
order-two zero threshold is $Q>d$. These differ precisely at
$p$-power degrees.

\begin{corollary}\label{cor:gradientfibers}
If $Q\ge d$, every gradient fiber over a target
$a=(a_1,\ldots,a_n)$ with $\sum_i a_i=0$ is an affine line
times the corresponding gradient fiber of $e_d$ in $n-1$
variables. Targets outside that hyperplane have empty fibers.
\end{corollary}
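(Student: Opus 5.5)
We work in the coordinates of the normal form \eqref{eq:normal}: $z_i=x_i-x_n$ for $i<n$ and $t=x_n$. This is a linear automorphism of $\A^n$, so the plan is to compute the gradient in these coordinates and read off the fibers. Since $Q\ge d$, the only terms in \eqref{eq:normal} are $b=0$ and possibly $b=1$, the latter only when $Q=d$. Thus
\[
 e_d(x)=e_d(z_1,\ldots,z_{n-1})+\ind_{Q=d}\,u\,t^d .
\]
If $Q=d$ then $d$ is a power of $p$, so $\partial_t(t^d)=dt^{d-1}=0$ in $K$. In both cases the chain rule gives $\partial e_d/\partial t=0$ and $\partial e_d/\partial z_i=\partial_i e_d^{(n-1)}(z)$. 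Here $e_d^{(n-1)}$ is the elementary symmetric polynomial of degree $d$ in $n-1$ variables; it is nonzero since $d\le n-1$, because $L=n-d+1\ge Q\ge d\ge2$ forces $n>d$.

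Next we translate back to the partial derivatives $\partial/\partial x_j$. From $x_i=z_i+t$ for $i<n$ and $x_n=t$, we get $\partial/\partial z_i=\partial/\partial x_i$ for $i<n$ and $\partial/\partial t=\sum_{j=1}^n\partial/\partial x_j$. Write $g(z)=e_d^{(n-1)}(z)$. The identities above then say
\[
 \partial_i e_d(x)=\partial_i g(z)\quad(i<n),\qquad \sum_{j=1}^n\partial_j e_d(x)=0 .
\]
Hence $\partial_n e_d(x)=-\sum_{i<n}\partial_i g(z)$. The gradient map therefore factors as $x\mapsto z\mapsto \bigl(\nabla g(z),\,-\sum_i\partial_ig(z)\bigr)$. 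The second map lands in the hyperplane $\sum a_i=0$, so targets outside that hyperplane have empty fibers.

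Now take a target $a$ with $\sum_i a_i=0$. Its fiber is
\[
 \{(z,t): \nabla g(z)=(a_1,\ldots,a_{n-1})\},
\]
because the last coordinate equation $\partial_n e_d=a_n$ is automatically implied by the others and the hyperplane condition. This fiber is the product of $\A^1_t$ with the gradient fiber of $g$ over $(a_1,\ldots,a_{n-1})$. The statement holds scheme-theoretically as well: the ideal generated by the $n$ equations $\partial_je_d-a_j$ equals the ideal generated by the first $n-1$ of them. The reason is that the sum of all $n$ is the constant $-\sum a_j=0$, which is the identity $\sum_j\partial_je_d=0$.

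The only substantive point is that $\partial_t e_d$ vanishes identically when $Q\ge d$. That is exactly where the normal form \eqref{eq:normal} is used, together with the vanishing of $d t^{d-1}$ in the boundary case $Q=d$. One could also verify it directly: $\sum_j\partial_je_d=(n-d+1)e_{d-1}=Le_{d-1}$, which is $0$ in $K$ since $p\mid L$. That direct argument only needs $Q>1$, but the $t$-independence of the remaining coordinates of the gradient still rests on \eqref{eq:normal}.
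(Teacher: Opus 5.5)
Your proposal is correct and takes the same route as the paper. Both arguments differentiate the normal form \eqref{eq:normal} to get the gradient $(\nabla g(z),-\sum_i\partial_i g(z))$, independent of $t$, and read off the fibers; you additionally spell out what the paper leaves implicit, namely the chain rule, the vanishing of $dt^{d-1}$ when $Q=d$, and the equality of ideals that makes the product decomposition scheme-theoretic.
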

\begin{proof}
Let $g=\nabla e_d(z_1,\ldots,z_{n-1})$. Differentiating
\eqref{eq:normal}, in the original coordinates, gives
\[
 (g_1,\ldots,g_{n-1},-\textstyle\sum_i g_i),
\]
independently of $t$.
This is an identity of morphisms and hence gives the stated
scheme-theoretic product of fibers.
\end{proof}

\section{The order-two criterion and examples}\label{sec:comparison}

Let $P_+$ be the smallest $p$-power strictly greater than $d$.
Formula \eqref{eq:Zdim} says
\[
 \dim Z_2(e_d)=d-1\quad\Longleftrightarrow\quad
 n-d+1\equiv0\pmod{P_+}.
\]
Conjecture~2.11 of \cite{Orzel} uses instead
$p^{\lceil\log_p d\rceil}$, together with $n\ge2d-1$.
Proposition~2.6 of that revision states a sufficient condition
with the same threshold. When $d$ is not a $p$-power, that threshold is
$P_+$, and its divisibility condition already implies $n\ge2d$.
When $d$ is a $p$-power, the strict threshold must instead be
$pd$. Theorem~\ref{thm:dimensions} proves the necessary and
sufficient criterion with this correction.

For the smallest example, take $p=2,d=2,n=3$. The gradient of
\[
 e_2=x_1x_2+x_1x_3+x_2x_3
\]
vanishes on the diagonal $(a,a,a)$, where $e_2=a^2$. Hence
$\dim C_1(e_2)=1$ but $Z_2(e_2)$ is supported at the origin.
Its coordinate ring is $K[t]/(t^2)$ by
Proposition~\ref{prop:stabilizer}. The printed conditions
$n\ge2d-1$ and $p^{\lceil\log_p d\rceil}\mid n-d+1$ hold,
but $\dim Z_2(e_2)=0$.

An odd-characteristic example is $p=3,d=3,n=5$. On the partition
space $(a,b,b,b,b)$, every first derivative vanishes, while
$e_3=b^3$. The exact dimensions are $\dim C_1(e_3)=2$ and
$\dim Z_2(e_3)=1$. Every coordinate-axis component of the latter
has generic multiplicity three. More generally, when $d=p^a$
and $n=2d-1$, the two dimensions are $d-1$ and $d-2$.

The correction concerns the arithmetic criterion for attaining
dimension $d-1$. It leaves unchanged the uniform upper bound used
in the formula lower-bound argument of \cite{Orzel}.

\section{Consequences and further questions}

The results determine the full dimension profile of the schemes
$C_q(e_d)$ and $Z_s(e_d)$ and explain the exceptional dimensions
through translation symmetry. Their scheme-theoretic content is
independent of a dimension increase: if $1<Q\le d$, the coordinate
components of $Z_s$ have nontrivial generic thickness even though
their dimensions agree with those in characteristic zero. At the
other extreme, $Q>d$ gives an actual translation direction and
increases every higher multiplicity dimension by one.

For the first-order problem, the distinction is especially explicit.
At $Q=d$, the normal form is $e_d(z)+ut^d$. The term $ut^d$ is
invisible to ordinary first derivatives, but it remains in the
equation of the zero hypersurface. This accounts simultaneously
for the difference between $C_1$ and $Z_2$, the strict divisibility
threshold in Corollary~\ref{cor:criterion}, and the infinitesimal
stabilizer $\alpha_d$.

In applications that use derivative-locus codimension, the formulas
supply the exact geometric input in every characteristic and degree.
The present paper does not derive a new asymptotic formula or circuit
lower bound from that refinement. Such an application would require
an additional argument connecting the relevant computational model
to these exact loci.

Several geometric questions remain. The smaller-dimensional and
embedded components are not classified here. Determining primary
decompositions or a general criterion for global reducedness would
require information beyond the generic coordinate local rings.
Likewise, Corollary~\ref{cor:gradientfibers} describes all gradient
fibers in the range where the gradient is invariant under diagonal
translations, but does not classify the fibers in the remaining
small-characteristic cases. These
questions concern the geometry left after the numerical dimension
problem has been resolved.

\section*{Acknowledgment}
The author acknowledges OpenAI GPT-6 for assistance with research exploration,
proof development, verification code, and manuscript preparation.

\end{document}